\newtheorem{theorem}{Theorem}[section]
\newtheorem{lemma}[theorem]{Lemma}
\newtheorem{ques}[theorem]{Question}
\newtheorem{proposition}[theorem]{Proposition}
\newtheorem{corollary}[theorem]{Corollary}
\theoremstyle{definition}
\newtheorem{definition}[theorem]{Definition}
\theoremstyle{remark}
\newtheorem{remark}[theorem]{Remark}
\numberwithin{equation}{section}
\def\fnote#1{\footnote}
\def\ignora#1{}
\def\n3#1{\left\vert  \! \left\vert \! \left\vert \, #1 \, \right\vert \!
  \right\vert \! \right\vert }
\DeclareMathOperator{\dmn}{dim\,}
\renewcommand{\geq}{\geqslant}
\renewcommand{\leq}{\leqslant}
\newcommand{\iten}{\ensuremath{\widehat{\otimes}_\varepsilon}}
\newcommand{\pten}{\ensuremath{\widehat{\otimes}_\pi}}
\begin{document}

\title[SSD2P in tensor products]{ Symmetric strong diameter two property in tensor products of Banach spaces }

\author {Johann Langemets} 

\address{Institute of Mathematics and Statistics, University of Tartu, Narva mnt 18, 51009 Tartu, Estonia}
\email{johann.langemets@ut.ee}
\thanks{This work was supported by the Estonian Research Council grant (PSG487).}
\urladdr{\url{https://johannlangemets.wordpress.com/}}

\subjclass[2010]{Primary 46B20, 46B28; Secondary 46B04.}
\keywords{Projective tensor product; Injective tensor
	product; Diameter two properties; Almost squareness}

\begin{abstract}
	We continue the investigation of the behaviour of diameter two properties in tensor products of Banach spaces. Our main result shows that the symmetric strong diameter two property is stable by taking projective tensor products. We also prove a result for the symmetric strong diameter two property for the injective tensor product.
\end{abstract}

\maketitle

\section{Introduction}
In the last 15 years a lot of attention has been devoted to various diameter two properties since several classical Banach spaces failing the Radon-Nikod\'ym property have them (see \cite{ALL}--\cite{becerra_guerrero_octahedral_2015}, \cite{hlln}--\cite{ruedaarxiv}). 

According to \cite{ANP} a Banach space $X$ has the
	\emph{symmetric strong diameter 2 property} (SSD2P) whenever $n\in \mathbb N$, $S_1,\dots, S_n$ are slices of $B_X$ and
	$\varepsilon > 0$, there exist $x_i \in S_i$ and $y \in B_X$,
	independent of $i$, such that $x_i \pm y \in S_i$ for every
	$i \in \{1,\dots,n\}$ and $\|y\| > 1 - \varepsilon$.

Geometrically the SSD2P of a Banach space says that, given a finite number of slices of the unit ball, there exists a direction such that all these slices contain a line segment of length almost 2 in this direction. Banach spaces with the SSD2P include, for example, uniform algbras, infinite-dimensional preduals of $L_1$-spaces, $M$-embedded spaces, and even certain Lipschitz spaces (see \cite{hlln} and \cite{LRZ}). If a Banach
space has the SSD2P, then the space has the strong diameter two property (SD2P), that is, every convex combination of slices of the unit ball has diameter two \cite[Lemma~4.1]{ALN}. Observe that the SSD2P is strictly stronger than
the SD2P, for example, $L_1[0,1]$ has the SD2P, but not the SSD2P \cite[Remark~3.3]{hlln}. In \cite{hlln} the following useful characterization of the SSD2P was obtained.

\begin{theorem}[see {\cite[Theorem 2.1]{hlln}}]\label{thm: SSD2P char}
	Let $X$ be a Banach space. The following assertions are equivalent:
	\begin{enumerate}
		\item[(i)]\label{item:ssd2p-char-a}
		$X$ has the SSD2P.
		\item[(ii)]\label{item:ssd2p-char-b}
		Whenever $n \in \mathbb{N}$, $U_1, \ldots, U_n$
		are nonempty relatively weakly open subsets of $B_X$
		and $\varepsilon > 0$,
		there exist $x_i \in U_i$, $i \in \{1,\dots,n\}$,
		and $y \in B_X$ such that $x_i \pm y \in U_i$ for
		every $i \in \{1,\dots,n\}$ and $\|y\| > 1 - \varepsilon$.
		\item[(iii)]\label{item:ssd2p-char-d}
		Whenever $n \in \mathbb{N}$,
		$x_1, \dots, x_n \in S_X$,
		there exist nets
		$(y^i_\alpha) \subset S_X$ and
		$(z_\alpha) \subset S_X$ such that
		$y^i_\alpha \to x_i$ weakly, $z_\alpha \to 0$
		weakly, and $\|y^i_\alpha \pm z_\alpha\|\rightarrow 1$
		for every $i \in \{1,\dots,n\}$.
	\end{enumerate}
\end{theorem}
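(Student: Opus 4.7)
The plan is to prove the theorem by the circular chain (ii)$\Rightarrow$(i)$\Rightarrow$(ii)$\Rightarrow$(iii)$\Rightarrow$(i). The implication (ii)$\Rightarrow$(i) is immediate, since every slice of $B_X$ is a nonempty relatively weakly open subset. The remaining three implications will each use a standard technical tool; I expect the main subtlety to be in (iii)$\Rightarrow$(i), where the elements $y^i_\alpha \pm z_\alpha$ need not lie in $B_X$ and must be rescaled before they can be placed inside the given slices.

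For (i)$\Rightarrow$(ii) I would invoke Bourgain's lemma: every nonempty relatively weakly open subset of $B_X$ contains a convex combination of slices of $B_X$. Given $U_1,\dots,U_n$ and $\varepsilon>0$, pick convex combinations $C_i=\sum_{j=1}^{m_i}\lambda_{i,j}S_{i,j}\subseteq U_i$, apply SSD2P simultaneously to the finite collection of slices $\{S_{i,j}\}_{i,j}$ to obtain $x_{i,j}\in S_{i,j}$ and a common $y\in B_X$ with $x_{i,j}\pm y\in S_{i,j}$ and $\|y\|>1-\varepsilon$, and finally set $x_i=\sum_j\lambda_{i,j}x_{i,j}$. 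Then $x_i\pm y=\sum_j\lambda_{i,j}(x_{i,j}\pm y)\in C_i\subseteq U_i$, which is exactly (ii).

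For (ii)$\Rightarrow$(iii), given $x_1,\dots,x_n\in S_X$, I would index a net by pairs $\alpha=(V,\varepsilon)$ where $V$ runs over basic weak $0$-neighborhoods of the form $\{x:|\langle x,f_j\rangle|<\delta,\ j=1,\dots,k\}$, ordered by reverse inclusion and decreasing $\varepsilon$. I will always insist that $V$ include norming functionals for each $x_i$ among the defining $f_j$. Setting $U_i=(x_i+\tfrac12 V)\cap B_X$ and applying (ii) produces $\widetilde{y}^i_\alpha\in U_i$ and $\widetilde{z}_\alpha\in B_X$ with $\widetilde{y}^i_\alpha\pm\widetilde{z}_\alpha\in U_i$ and $\|\widetilde{z}_\alpha\|>1-\varepsilon$; the triangle inequality on the defining functionals of $V$ then forces $\widetilde{z}_\alpha\in V$. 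By construction $\widetilde{y}^i_\alpha\to x_i$ and $\widetilde{z}_\alpha\to0$ weakly, and the norming-functional trick shows $\|\widetilde{y}^i_\alpha\|\to1$; normalising $y^i_\alpha=\widetilde{y}^i_\alpha/\|\widetilde{y}^i_\alpha\|$ and $z_\alpha=\widetilde{z}_\alpha/\|\widetilde{z}_\alpha\|$ preserves the weak limits, and weak lower semicontinuity of the norm combined with $\|\widetilde{y}^i_\alpha\pm\widetilde{z}_\alpha\|\leq 1$ forces $\|y^i_\alpha\pm z_\alpha\|\to1$.

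Finally, for (iii)$\Rightarrow$(i) I would take slices $S_i=\{x\in B_X:\langle x,f_i\rangle>1-\delta\}$ with $\|f_i\|=1$, choose $x^i\in S_X$ with $\langle x^i,f_i\rangle>1-\delta/2$, and feed $x^1,\dots,x^n$ into (iii). For $\alpha$ large enough one obtains $\langle y^i_\alpha,f_i\rangle>1-3\delta/4$, $|\langle z_\alpha,f_i\rangle|$ arbitrarily small, and $\max_i\|y^i_\alpha\pm z_\alpha\|$ arbitrarily close to $1$. The obstacle is that $y^i_\alpha\pm z_\alpha$ may have norm slightly exceeding $1$, so I rescale by $t=1/\max_{i,\pm}\|y^i_\alpha\pm z_\alpha\|\leq 1$ and set $x_i=ty^i_\alpha$, $y=tz_\alpha$. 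Then $x_i\pm y\in B_X$ automatically, the functional inequalities are only perturbed by a factor of $t$ close to $1$, so $x_i\pm y\in S_i$, and $\|y\|=t>1-\varepsilon$ for $\alpha$ sufficiently far along, which is (i).
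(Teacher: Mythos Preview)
The paper does not prove this theorem; it is quoted verbatim from \cite[Theorem~2.1]{hlln} and used as a black box, so there is no in-paper proof to compare your attempt against.

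That said, your sketch is correct and follows what is presumably the standard route. A couple of minor remarks. In (ii)$\Rightarrow$(iii) your insistence that $V$ contain norming functionals for the $x_i$ is harmless but unnecessary: once you know $\widetilde{y}^i_\alpha\to x_i$ weakly with $\|x_i\|=1$, weak lower semicontinuity of the norm already gives $\liminf_\alpha\|\widetilde{y}^i_\alpha\|\geq 1$, hence $\|\widetilde{y}^i_\alpha\|\to 1$. In (iii)$\Rightarrow$(i) you should note explicitly why $t\leq 1$: for each $i$ the parallelogram inequality gives $\|y^i_\alpha+z_\alpha\|+\|y^i_\alpha-z_\alpha\|\geq 2\|y^i_\alpha\|=2$, so the maximum over $\pm$ is at least $1$. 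Also, you do not need to verify $x_i\in S_i$ separately, since $S_i$ is convex and $x_i=\tfrac12\bigl((x_i+y)+(x_i-y)\bigr)$.
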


Recall from \cite{ALL} that a Banach space $X$
is \emph{almost square} (ASQ) if whenever $n \in \mathbb{N}$,
and $x_1,\dotsc,x_n \in S_X$, there exists a sequence
$(y_k) \subset S_X$ such that $y_k \to 0$
weakly and $\|x_i \pm y_k\| \to 1$ for every $i \in \{1,\dots,n\}$. Hence, by Theorem~\ref{thm: SSD2P char} (iii), one has that an ASQ Banach space always has the SSD2P. However, the
converse fails, $C[0,1]$ has the SSD2P (see the proof of \cite[Proposition~4.6]{ALN2})
and is not ASQ (this can be easily seen by considering the
constant 1 function).  Spaces which are ASQ include $c_0(X_n)$,
where $X_n$ are arbitrary Banach spaces,
and Banach spaces $X$ which are M-ideals
in $X^{**}$ (see \cite{ALL}).

To summarize, we have the following diagram
\[
\text{ ASQ }\Rightarrow \text{ SSD2P } \Rightarrow \text{ SD2P,}
\]
where none of the implications is reversible.

Abrahamsen, Lima, and Nygaard asked in \cite[Section 5, (b)]{ALN} how are the diameter two properties in general preserved by tensor products. In \cite[Theorem~3.5]{becerra_guerrero_octahedral_2015} Becerra~Guerrero, {L}\'{o}pez-{P}\'{e}rez, and Rueda~Zoca proved that the SD2P is preserved from both factors by taking projective tensor product of Banach spaces. Actually, one can even weaken the hypothesis on one of the factors \cite[Theorem~2.2]{hlp}. Very recently Rueda Zoca showed that almost squareness is also preserved from both factors by taking projective tensor product \cite[Theorem~2.1]{rueda}. Therefore it is natural to wonder whether the SSD2P is also stable by forming projective tensor products of Banach spaces. In Section~\ref{sec: projective tensor} we will prove that it is indeed so (see Theorem~\ref{thm: SSD2P in proj tensor product}). This result is proven by making use of the equivalent characterization of the SSD2P from Theorem~\ref{thm: SSD2P char} and classical Rademacher techniques (see \cite{rueda} and \cite{rya}). In Section~\ref{sec: inj tensor} we will provide a sufficient condition on $X$ to assure the SSD2P in $X\iten Y$ for any nontrivial Banach space $Y$.

We pass now to introduce some notation. 
All Banach spaces considered in this paper are nontrivial and over
the real field. The closed unit ball of a Banach space $X$ is denoted
by $B_X$ and its unit sphere by $S_X$. The dual space of $X$ is
denoted by $X^\ast$ and the bidual by $X^{\ast\ast}$.
By a \emph{slice} of $B_X$ we mean a set of the form
\begin{equation*}
S(B_X, x^*,\alpha) :=
\{
x \in B_X : x^*(x) > 1 - \alpha
\},
\end{equation*}
where $x^* \in S_{X^*}$ and $\alpha > 0$.

Given two Banach spaces $X$ and $Y$, we will denote by $X\pten Y$ the projective and by $X\iten Y$ the injective tensor product of $X$ and $Y$. Recall that the space $\mathcal{B}(X\times Y)$ of bounded bilinear forms defined on $X\times Y$ is linearly isometric to the topological dual of $X\pten Y$. We refer to \cite{rya} for a detailed treatment and applications of tensor products.

For a Banach space $X$ we denote by $\mathcal{L}(X)$ the space of all bounded and linear operators on $X$. By a \emph{multiplier} on $X$ we mean an element $T\in \mathcal{L}(X)$ such that every extreme point of $B_{X^*}$ becomes an eigenvector for $T^*$. The \emph{centralizer} of a real Banach space $X$ (denoted by $Z(X)$) coincides with the set of all multipliers on $X$. We refer to \cite{Behrends} for a detailed treatment of centralizers.

\section{Projective tensor product}\label{sec: projective tensor} 

We begin by recalling a slight rephrasing of a Lemma from \cite{rueda}, which we will include without a proof for the sake of readability.

\begin{lemma}[see {\cite[Lemma~2.2]{rueda}}]\label{lemma: rademacher} Let $X$ and $Y$ be Banach spaces. If  $x,\tilde{x}\in B_X$ and $y,\tilde{y}\in B_Y$ are such that 
	\[
	\|x\pm \tilde{x}\|\leq 1\text{ and } \|y\pm \tilde{y}\|\leq 1,
\]
then 
	\[
	\|x\otimes y \pm \tilde{x}\otimes \tilde{y}\|\leq 1.
	\]
\end{lemma}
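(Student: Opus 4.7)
The plan is to invoke the definition of the projective tensor norm, which says that $\|u\|_\pi \leq \sum_{i=1}^n \|x_i\| \|y_i\|$ for any representation $u = \sum_{i=1}^n x_i \otimes y_i$, and to produce a length-two representation of $x \otimes y \pm \tilde{x} \otimes \tilde{y}$ whose tensor factors have norm at most $1$. This is the classical Rademacher trick that is also used in the proofs of \cite[Theorem~3.5]{becerra_guerrero_octahedral_2015} and \cite[Theorem~2.1]{rueda} referenced in the introduction.

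The key algebraic identity I would expand is
\begin{equation*}
(x+\tilde{x}) \otimes (y+\tilde{y}) + (x-\tilde{x}) \otimes (y-\tilde{y}) = 2\bigl(x \otimes y + \tilde{x} \otimes \tilde{y}\bigr),
\end{equation*}
which follows by straightforward bilinearity (the cross terms $x\otimes\tilde y$ and $\tilde x\otimes y$ cancel). This is precisely the identity one obtains by averaging $\varepsilon_1 x \otimes \varepsilon_2 y$ over the Rademacher signs $(\varepsilon_1,\varepsilon_2) \in \{-1,+1\}^2$ with an appropriate choice of signs, hence the name. The analogous identity
\begin{equation*}
(x+\tilde{x}) \otimes (y-\tilde{y}) + (x-\tilde{x}) \otimes (y+\tilde{y}) = 2\bigl(x \otimes y - \tilde{x} \otimes \tilde{y}\bigr)
\end{equation*}
handles the minus sign.

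From these two decompositions the estimate is immediate: writing
\begin{equation*}
x \otimes y + \tilde{x}\otimes\tilde{y} = \tfrac{1}{2}(x+\tilde{x})\otimes(y+\tilde{y}) + \tfrac{1}{2}(x-\tilde{x})\otimes(y-\tilde{y}),
\end{equation*}
the definition of $\|\cdot\|_\pi$ gives
\begin{equation*}
\|x\otimes y + \tilde{x}\otimes\tilde{y}\|_\pi \leq \tfrac{1}{2}\|x+\tilde{x}\|\,\|y+\tilde{y}\| + \tfrac{1}{2}\|x-\tilde{x}\|\,\|y-\tilde{y}\| \leq \tfrac{1}{2} + \tfrac{1}{2} = 1,
\end{equation*}
by the hypothesis $\|x\pm \tilde x\|\leq 1$ and $\|y\pm \tilde y\|\leq 1$. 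The minus-sign case is identical using the second identity.

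Since the argument amounts to a bilinear expansion followed by the triangle inequality for the projective norm, there is no real obstacle here; the only thing to be careful about is choosing the correct signs in the decomposition so that every tensor factor lands in the appropriate unit ball. The lemma is essentially a two-term version of the standard fact that the projective norm interacts nicely with symmetric averages, and its usefulness in the sequel will come from feeding it weakly null nets in the spirit of Theorem~\ref{thm: SSD2P char}(iii).
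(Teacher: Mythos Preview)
Your argument is correct: the two bilinear identities you expand are exactly the Rademacher averaging trick, and applying the projective-norm triangle inequality to each yields the bound immediately. The paper itself does not supply a proof of this lemma---it is quoted from \cite[Lemma~2.2]{rueda} ``without a proof for the sake of readability''---but the approach you give is precisely the classical Rademacher technique the paper references in the introduction and that underlies the cited result.
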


We are now ready to prove our main result as promised in the Introduction, which will provide a large class of Banach spaces with the SSD2P.

 \begin{theorem}\label{thm: SSD2P in proj tensor product}
 Let $X$ and $Y$ be Banach spaces. If $X$ and $Y$ have the SSD2P, then so does $X\pten Y$.
 \end{theorem}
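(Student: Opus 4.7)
The plan is to verify the SSD2P of $X\pten Y$ through the slice characterization in Theorem~\ref{thm: SSD2P char}(i), while using the (equivalent) weakly-open-set form (ii) when I invoke the SSD2P of the factors $X$ and $Y$ separately. Fix slices $S_i=S(B_{X\pten Y},B_i,\alpha_i)$ of $B_{X\pten Y}$ with $\|B_i\|=1$ and fix $\varepsilon>0$. Since $\|B_i\|=1$, I can select $a_i\in S_X$ and $b_i\in S_Y$ with $B_i(a_i,b_i)>1-\alpha_i/3$. The target points and perturbation will be of product form, $z_i=a_i'\otimes b_i'$ and $w=u\otimes v$, chosen so that $\|a_i'\pm u\|\leq 1$ and $\|b_i'\pm v\|\leq 1$; Lemma~\ref{lemma: rademacher} will then give $\|z_i\pm w\|_\pi\leq 1$, and since $\|w\|_\pi=\|u\|\|v\|$, the norm requirement $\|w\|>1-\varepsilon$ reduces to having $\|u\|$ and $\|v\|$ close to $1$.

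To produce $u$, I apply the SSD2P of $X$ to the weakly open sets $T_i^X=\{x\in B_X:B_i(x,b_i)>1-\alpha_i/2\}$, each of which contains $a_i$. Theorem~\ref{thm: SSD2P char}(ii) yields $a_i'\in T_i^X$ and a common $u\in B_X$ with $a_i'\pm u\in T_i^X$ and $\|u\|>1-\delta$, for a small $\delta=\delta(\varepsilon)$ fixed in advance. In particular $\|a_i'\pm u\|\leq 1$ and $B_i(a_i'\pm u,b_i)>1-\alpha_i/2$ for every $i$. To produce $v$, I need a \emph{single} $b_i'$ that is simultaneously good for the two $Y$-functionals $B_i(a_i'+u,\cdot)$ and $B_i(a_i'-u,\cdot)$, so I form the weakly open intersections
\[
T_i^Y=\{y\in B_Y:B_i(a_i'+u,y)>1-\alpha_i\text{ and }B_i(a_i'-u,y)>1-\alpha_i\},
\]
each of which contains $b_i$ by the previous estimate. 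Theorem~\ref{thm: SSD2P char}(ii) applied to $Y$ now yields $b_i'\in T_i^Y$ and a common $v\in B_Y$ with $b_i'\pm v\in T_i^Y$ and $\|v\|>1-\delta$, producing the four inequalities $B_i(a_i'\pm u,b_i'\pm v)>1-\alpha_i$ for each $i$.

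To finish, I average: adding the $(+,+)$ and $(-,-)$ inequalities gives $B_i(a_i',b_i')+B_i(u,v)>1-\alpha_i$, which is exactly $B_i(z_i+w)>1-\alpha_i$, and $(+,-)+(-,+)$ similarly yields $B_i(z_i-w)>1-\alpha_i$, so $z_i\pm w\in S_i$ (and hence also $z_i\in S_i$ by convexity). Since $\|w\|_\pi=\|u\|\|v\|>(1-\delta)^2$, choosing $\delta$ in advance so that $(1-\delta)^2>1-\varepsilon$ completes the argument. The main obstacle is precisely the $Y$-step: a direct application of the \emph{slice} form of the SSD2P to the two functionals $B_i(a_i'\pm u,\cdot)$ would produce \emph{different} elements in each of those slices rather than a single $b_i'$; this is circumvented by passing to the weakly open intersections $T_i^Y$ and invoking the weakly-open-set formulation of the SSD2P from Theorem~\ref{thm: SSD2P char}(ii).
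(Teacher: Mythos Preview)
Your proof is correct and follows essentially the same route as the paper's: apply the SSD2P of $X$ first, then apply the weakly-open-set form (Theorem~\ref{thm: SSD2P char}(ii)) of the SSD2P of $Y$ to suitably chosen intersections of half-spaces, and finish with Lemma~\ref{lemma: rademacher}. The only cosmetic difference is bookkeeping: the paper encodes the $Y$-sets as ``$B_i(x_i,y)$ large and $|B_i(x,y)|$ small'' and then bounds $B_i(z_i\pm z)\geq B_i(x_i,y_i)-|B_i(x,y)|$, whereas you encode them as ``both $B_i(a_i'\pm u,y)$ large'' and recover $B_i(z_i\pm w)$ by averaging the four resulting inequalities---a slightly cleaner variant of the same computation.
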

 \begin{proof}
 Let $n\in\mathbb{N}$ and consider the slices $S_i:=S(B_{X\pten Y}, B_i, \alpha_i)$, $i\in \{1,\dots,n\}$, where $B_i$ are norm one bilinear forms and $\alpha_i>0$. Let $\varepsilon>0$ be such that $\varepsilon<\min_{i\in\{1,\dots,n\}} \alpha_i$. We will show that there are $z_i\in S_i$ and $z\in B_{X\pten Y}$ such that $z_i\pm z\in S_i$ for every $i\in \{1,\dots,n\}$ and $\|z\|>1-\varepsilon$.
 
 Let $\delta>0$. Choose elements $u_i\otimes v_i\in S_X\otimes S_Y$ such that $B_i(u_i,v_i)>1-\delta$ for every $i\in \{1,\dots,n\}$. 
 
 Consider first the slices $U_i:=S(B_X, \frac{B_i(\cdot, v_i)}{\|B_i(\cdot, v_i)\|}, \delta)$ of $B_X$. Since $X$ has the SSD2P, there are $x_i\in U_i$ and $x\in B_X$ such that $x_i\pm x\in U_i$ and $\|x\|> 1-\delta$. Therefore,
 \[
 |B_i(x, v_i)| <\delta\|B_i(\cdot, v_i)\| \quad \text{for every $i\in \{1,\dots,n\}$,}
 \]
 because $\|x_i\pm x\|\leq 1$ and $B_i(x_i,v_i)>(1-\delta)\|B_i(\cdot, v_i)\|$. Consider now the relatively weakly open sets 
 \[
 V_i:=\{y\in B_Y\colon B_i(x_i, y)>(1-\delta)\|B_i(\cdot,v_i)\|\}
 \]
 and
 \[V^0_i:=\{y\in B_Y\colon |B_i(x, y)| <\delta\|B_i(\cdot, v_i)\|,\quad i\in \{1,\dots,n\} \}.
\]
Observe that $W_i:= V_i\cap V^0_i$ are nonempty relatively weakly open subsets of $B_Y$, because $v_i\in W_i$ for every $i\in \{1,\dots,n\}$. Since $Y$ also has the SSD2P, by Theorem~\ref{thm: SSD2P char}~(ii), there are $y_i\in W_i$ and $y\in B_Y$ such that $y_i\pm y\in W_i$ and $\|y\|> 1-\delta$. Observe that
\[
|B_i(x, y)| <2\delta \|B_i(\cdot, v_i)\| \quad \text{for every $i\in \{1,\dots,n\}$,}
\]
because $|B_i(x, y_i\pm y)| <\delta \|B_i(\cdot, v_i)\|$  and $|B_i(x, y_i)| <\delta \|B_i(\cdot, v_i)\|$.

For every $i\in \{1,\dots,n\}$ set $z_i:=x_i\otimes y_i$ and $z:=x\otimes y$. Clearly, $\|z_i\|\leq 1$  and $(1-\delta)^2\leq\|z\|\leq 1$. The fact that $\|z_i\pm z\|\leq 1$ follows from Lemma~\ref{lemma: rademacher}.

Finally,
\begin{align*}
B_i(z_i)=B_i(x_i,y_i)>(1-\delta)\|B_i(\cdot,v_i)\|>(1-\delta)^2
\end{align*}
and
\begin{align*}
    B_i(z_i\pm z)&=B_i(x_i,y_i)\pm B_i(x,y)\\
    &\geq B_i(x_i,y_i)-|B_i(x,y)|\\
    &>(1-\delta)^2-2\delta \|B_i(\cdot, v_i)\|\\
    &\geq 1-4\delta+\delta^2.
\end{align*}
Therefore, by choosing $\delta$ small enough, we can assure that the elements $z_i$ and $z$ are the ones we need.
 \end{proof}

\begin{remark}
	Theorem \ref{thm: SSD2P in proj tensor product} remains no longer true if one assumes that only $X$ has the SSD2P. Indeed, by \cite[Corollary~3.9]{llr2}, the space $\ell_\infty\pten \ell^3_3$ fails the SD2P (hence also the SSD2P) although $\ell_\infty$ has the SSD2P.
\end{remark}

\begin{remark}
	Recall a property for a Banach space $X$ that was used as part of the hypothesis in \cite[Theorem~3.2]{becerra_guerrero_octahedral_2015}:
	\begin{itemize}
		\item[(P)] there is a $u\in S_X$ such that for every $x\in S_X$ and every $\varepsilon>0$, there is an $x^*\in B_{X^*}$ satisfying $|x^*(x)|>1-\varepsilon\quad\text{and}\quad x^*(u)=1.$
	\end{itemize}
	From \cite[Theorem~3.2]{becerra_guerrero_octahedral_2015} it follows that if $X$ has the SD2P and $Y^*$ has (P), then $X\pten Y$ has the SD2P. However, a similar statement for the SSD2P is no longer true, because $\ell_\infty$ has (P), but $X\pten \ell_1=\ell_1(X)$ which never has the SSD2P \cite[Theorem~3.1]{hlln}.
	
\end{remark}

\begin{remark}\label{rem: sequential SD2P}
	In \cite[Definition~2.6]{rueda} a formally stronger version of the SSD2P was introduced, which is called the sequential SD2P. Following the ideas in the proof of \cite[Theorem~2.1]{rueda} a bit more technical proof gives that the sequential SD2P is also stable by taking projective tensor products.
\end{remark}

For a Banach space $X$ one can consider the increasing sequence of its even duals
\[
X\subset X^{\ast\ast}\subset X^{(4}\subset \dots 
\]
Since every Banach space is isometrically embedded into its second dual, we can define $X^{(\infty}$ as the completion of the normed space $\bigcup_{n=0}^{\infty} X^{(2n}$. 

Observe that the proof of \cite[Theorem~3.4]{ABGLP2012} actually shows that: 
\begin{lemma}\label{lem: infinte centralizer}
	If $\dmn (Z(X^{(\infty}))=\infty$, then $X$ has the SSD2P.
\end{lemma}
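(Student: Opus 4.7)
The plan is to adapt the argument behind \cite[Theorem~3.4]{ABGLP2012}, which uses an infinite-dimensional centralizer of $X^{(\infty}$ to produce, for any finite family of slices of $B_X$, pairs of elements witnessing diameter two. The key observation underlying the preceding remark is that this argument in fact supplies a \emph{single} direction $y \in B_X$ common to all the slices, which is precisely what the SSD2P requires.

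First, I would set up the data: fix slices $S_i = S(B_X, x_i^*, \alpha_i)$ for $i \in \{1,\dots,n\}$ and $\varepsilon > 0$, pick $x_i \in S_i$ with $x_i^*(x_i)$ sufficiently close to $1$, and regard $x_1^*,\dots,x_n^*$ as elements of $(X^{(\infty})^*$ via the canonical embeddings. The structural input is the Banach-algebra identification $Z(X^{(\infty}) \cong C(K)$ for some compact Hausdorff $K$; the hypothesis $\dmn (Z(X^{(\infty})) = \infty$ forces $K$ to be infinite, so there are arbitrarily large families of pairwise disjoint nontrivial positive idempotents in $Z(X^{(\infty})$. Using this abundance, I would manufacture a multiplier $T \in Z(X^{(\infty})$ of norm one whose adjoint essentially annihilates each of the finitely many functionals $x_1^*,\dots,x_n^*$, yet for which $\|T\xi\|$ stays close to $1$ for some $\xi \in B_{X^{(\infty}}$. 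The element $\widetilde y := T\xi$ then lies in $B_{X^{(\infty}}$, has norm close to $1$, and satisfies $|x_i^*(\widetilde y)|$ arbitrarily small for every $i$.

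The final step, which is also the principal obstacle, is to descend $\widetilde y$ to an honest element of $B_X$. I would iterate the Principle of Local Reflexivity along the tower $X \subset X^{**} \subset X^{(4} \subset \cdots$, extracting $y \in B_X$ with $\|y\| > 1-\varepsilon$ and with $|x_i^*(y)|$ still small enough that $x_i \pm y \in S_i$ for every $i$. The delicate point is precisely this descent: local reflexivity must be applied a number of times dictated by the level at which $T$ and $\xi$ actually live inside the tower, while keeping uniform control on $\|y\|$ and on the finitely many numbers $x_i^*(y)$ simultaneously. Once this is achieved, $y$ is the common direction required by the definition of the SSD2P, and one inherits the technical core of this descent verbatim from the proof of \cite[Theorem~3.4]{ABGLP2012}.
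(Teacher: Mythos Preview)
Your proposal is correct and is exactly the approach the paper takes: the paper does not give a separate proof but simply records that the proof of \cite[Theorem~3.4]{ABGLP2012} already yields the SSD2P (not merely the SD2P), which is precisely your key observation that the construction there produces a single common direction $y$. Your outline of the centralizer argument and the descent via iterated local reflexivity is an accurate expansion of what that cited proof does.
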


\begin{remark}
	Banach spaces which satisfy the assumption of Lemma~\ref{lem: infinte centralizer} include, for example, infinite-dimensional preduals of $L_1$-spaces, infinite-dimensional $C^{*}$-algebras, and $\mathcal{L}(X,Y)$ whenever $Z(Y)$ is infinite-dimensional (see \cite[pp.~466]{ABGRP}). 
\end{remark}

Therefore, by combining Thereom~\ref{thm: SSD2P in proj tensor product} with Lemma~\ref{lem: infinte centralizer}, we get a new consequence, which improves the result \cite[Corollary~3.8]{becerra_guerrero_octahedral_2015}, where under the same hypotheses it is obtained that $X\pten Y$ has the SD2P.

\begin{corollary}
	Let $X$ and $Y$ be Banach spaces. If $Z(X^{(\infty})$ and $Z(Y^{(\infty})$ are infinite-dimensional, then $X\pten Y$ has the SSD2P.
\end{corollary}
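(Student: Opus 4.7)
The plan is to reduce the corollary to a straightforward combination of the two results placed immediately before its statement: the main Theorem~\ref{thm: SSD2P in proj tensor product} on stability of the SSD2P under projective tensor products, and Lemma~\ref{lem: infinte centralizer} relating the dimension of the centralizer of $X^{(\infty}$ to the SSD2P of $X$. Since the hypothesis is symmetric in $X$ and $Y$, I would apply Lemma~\ref{lem: infinte centralizer} twice, once to each factor.

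First, I would invoke Lemma~\ref{lem: infinte centralizer} with the Banach space $X$: by assumption $\dim Z(X^{(\infty}) = \infty$, so the lemma yields that $X$ has the SSD2P. Applying the same lemma to $Y$, from $\dim Z(Y^{(\infty}) = \infty$ I obtain that $Y$ has the SSD2P as well. At this point both factors in the projective tensor product satisfy the SSD2P, which is exactly the hypothesis required by Theorem~\ref{thm: SSD2P in proj tensor product}.

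Finally, I would apply Theorem~\ref{thm: SSD2P in proj tensor product} to the pair $(X,Y)$ and conclude that $X \pten Y$ has the SSD2P. There is no real obstacle here; the work has already been done in Theorem~\ref{thm: SSD2P in proj tensor product} (which handles the tensor product stability via Rademacher-type arguments and Lemma~\ref{lemma: rademacher}) and in Lemma~\ref{lem: infinte centralizer} (which is inherited from the proof of \cite[Theorem~3.4]{ABGLP2012}). The only thing to be careful about is that the hypothesis concerns $X^{(\infty}$ and $Y^{(\infty}$ rather than $X$ and $Y$ directly, but this is precisely the role of Lemma~\ref{lem: infinte centralizer}: it transfers information from the iterated biduals back to the original spaces, so no extra bidual argument on the tensor product $X \pten Y$ is needed.
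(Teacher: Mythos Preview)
Your proposal is correct and follows exactly the paper's own approach: the corollary is stated precisely as the combination of Theorem~\ref{thm: SSD2P in proj tensor product} with Lemma~\ref{lem: infinte centralizer}, applied once to each factor.
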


\section{Injective tensor product}\label{sec: inj tensor}

Recall from \cite[Corollary~2.7]{llr} that $X\iten Y$ is ASQ whenever $X$ is ASQ. However, it is not known whether a similar result holds for the SD2P \cite[Remark~4.5 (2)]{ruedaarxiv}. 

On the contary to stability of the SSD2P in projective tensor products, we will now prove that for injective tensor products it is enough to assume a sufficient condition on only one of the factors. Next result improves \cite[Theorem~5.3]{ABGRP}, where under the same hypotheses it is obtained that $X\iten Y$ has the diameter two property, however our argument follows essentially the same idea.

\begin{theorem}\label{thm: SSD2P injective tensor}
	Let $X$ and $Y$ be Banach spaces. If $\sup\{\dmn (Z(X^{(2n}))\colon n\in\mathbb N)\}=\infty$, then $X\iten Y$ has the SSD2P.  
\end{theorem}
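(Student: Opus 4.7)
The plan is to mimic the centralizer-based argument behind Lemma~\ref{lem: infinte centralizer} inside the injective tensor product, exploiting the multiplicativity of the injective norm on elementary tensors: $\|x\otimes v\|_{X\iten Y}=\|x\|\,\|v\|$. Thus any near-norming element of $B_X$ that almost annihilates finitely many prescribed functionals can be transported verbatim to $X\iten Y$ as the first factor of an elementary tensor.

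Given slices $S_i=S(B_{X\iten Y},\phi_i,\alpha_i)$ for $i=1,\ldots,n$ and $\varepsilon>0$, fix $\delta>0$ small compared to $\min_i\alpha_i$ and $\varepsilon$. By density of elementary tensors in $X\iten Y$, select $u_i\otimes v_i\in B_X\otimes B_Y$ with $\phi_i(u_i\otimes v_i)>1-\delta$. Each $\phi_i\in (X\iten Y)^*$ gives rise to an operator $T_i\colon X\to Y^*$ defined by $T_i(x)(y)=\phi_i(x\otimes y)$, with $\|T_i\|\leq 1$. The goal reduces to producing a single $x\in B_X$ with $\|x\|>1-\varepsilon$ and $\|T_i(x)\|_{Y^*}<\delta$ for every $i$: then setting $z_i:=u_i\otimes v_i$ and $z:=x\otimes v_1$ one obtains $\|z\|=\|x\|>1-\varepsilon$ and
\[
\phi_i(z_i\pm z)\geq \phi_i(u_i\otimes v_i)-\|T_i(x)\|\,\|v_1\|>(1-\delta)-\delta>1-\alpha_i,
\]
placing $z_i\pm z$ in $S_i$ for $\delta$ small enough.

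To produce such an $x$, invoke the hypothesis and pick $m$ with $\dmn Z(X^{(2m})\geq N$ for $N$ much larger than $n/\delta$. The centralizer then induces an $M$-decomposition of $X^{(2m}$ into $N$ non-trivial $M$-summands. The iterated biadjoints $\widetilde{T}_i\colon X^{(2m}\to Y^{*(2m+1)}$ distribute their norm $L$-additively across the dual $L$-decomposition, so a pigeonhole argument produces an $M$-summand $M_{j_0}$ on which every $\widetilde{T}_i$ has norm below $\delta$. A norm-one element $\xi\in M_{j_0}$ is then weak*-approximated by an element $x\in B_X$ via the principle of local reflexivity applied to the finitely many weak*-lower-semicontinuous seminorms $\|\widetilde{T}_i(\cdot)\|$, yielding the desired $x$.

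The main obstacle is this pigeonhole-plus-approximation step: one must guarantee that $N$ can be made large enough (relative to $n$ and $\delta$) for a \emph{common} small $M$-summand to exist for all $n$ functionals simultaneously, and that the abstract element of $X^{(2m}$ can be pulled back to $B_X$ without losing the smallness of the $T_i$-norms. The first part is a routine counting argument once one knows that the iterated biadjoints decompose $L$-additively across the $M$-summand decomposition, which is the standard $M$-$L$ duality of summands; the second part is a standard application of local reflexivity, since the relevant seminorms $\|\widetilde{T}_i(\cdot)\|$ lift to weak*-lower-semicontinuous seminorms on each even dual of $X$.
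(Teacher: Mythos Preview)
Your plan has two gaps that cannot be repaired within the approach. First, slices of $B_{X\iten Y}$ need not contain elementary tensors: the norm of $\phi\in(X\iten Y)^*$ is its \emph{integral} norm, which in general strictly exceeds $\sup\{|\phi(u\otimes v)|:u\in B_X,\ v\in B_Y\}$ (the latter being the bilinear-form norm, i.e.\ the $(X\pten Y)^*$-norm). Concretely, for $X=Y=\ell_2$ and $\phi=\tfrac12(e_1\otimes e_1+e_2\otimes e_2)$ one has $\|\phi\|_{(X\iten Y)^*}=1$ while $\phi(u\otimes v)\leq\tfrac12$ for all $u,v\in B_{\ell_2}$, so $S(B_{X\iten Y},\phi,\tfrac14)$ contains no $u\otimes v$. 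Your reduction to elementary $z_i=u_i\otimes v_i$ therefore fails at the outset. Second, even granting such $z_i$, you verify only $\phi_i(z_i\pm z)>1-\alpha_i$ and never $\|z_i\pm z\|_\varepsilon\leq 1$; with $z=x\otimes v_1$ there is no mechanism forcing $\|u_i\otimes v_i\pm x\otimes v_1\|_\varepsilon\leq 1$, because the smallness of $\|T_i(x)\|$ constrains only the single functional $\phi_i$, not the supremum over all $x^*\otimes y^*$ that computes the injective norm. (The pigeonhole step is also unsound as stated: for an operator $T\colon\bigoplus_\infty M_j\to Z$ one only has $\|T|_{M_j}\|\leq\|T\|$, with no summability of the restrictions; the inclusion $c_0\hookrightarrow\ell_\infty$ has norm $1$ on every coordinate $M$-summand, so no summand is ``small''.)

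The paper avoids all of this by arguing indirectly at the level of centralizers. It first records that $(X\iten Y)^{(\infty}=(X^{(2n}\iten Y)^{(\infty}$ for every $n$, then assumes $X\iten Y$ fails the SSD2P and invokes the contrapositive of Lemma~\ref{lem: infinte centralizer} to obtain $\dmn Z\big((X\iten Y)^{(\infty}\big)\leq m$ for some $m$; combined with Wickstead's embedding $Z(X^{(2n})\otimes Z(Y)\hookrightarrow Z(X^{(2n}\iten Y)$ this forces $\dmn Z(X^{(2n})\leq m$ for all $n$, contradicting the hypothesis. No slices, elementary tensors, or local reflexivity enter the argument.
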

\begin{proof}
Arguing as in the beginning of the proof of \cite[Theorem~5.3]{ABGRP} one has that 
\begin{equation}\label{eq: inj tensor}
(X\iten Y)^{(\infty}=(X^{(2n}\iten Y)^{(\infty}\quad \text{ for every $n\in \mathbb{N}$.}
\end{equation}

	On the contrary, suppose that $X\iten Y$ fails the SSD2P, then, by Lemma~\ref{lem: infinte centralizer}, there is a $m\in \mathbb{N}$ such that $\dmn (Z((X\iten Y)^{(\infty}))\leq m$. Therefore, by (\ref{eq: inj tensor}), for every $n\in \mathbb N$ we have that $\dmn (Z((X^{(2n}\iten Y)^{(\infty}))\leq m$ also. Since $Z(X^{(2n}\iten Y)$ contains a copy of $Z(X^{(2n})\otimes Z(Y)$ (see \cite{Wickstead}) and $Z(Y)\neq 0$ we conclude that $\dmn Z(X^{(2n})\leq m$ for every $n\in \mathbb{N}$. This contradicts the assumption that $\sup\{\dmn (Z(X^{(2n}))\colon n\in\mathbb N)\}=\infty$.
\end{proof}

In the light of Theorem~\ref{thm: SSD2P injective tensor} and \cite[Corollary~2.7]{llr} it is natural to wonder.

\begin{ques}
	If $X$ has the SSD2P, then $X\iten Y$ has the SSD2P?
\end{ques}

\section*{Acknowledgments}
The author wishes to thank Abraham Rueda Zoca for his comments on the topic of this paper, in particular, for pointing out Remark~\ref{rem: sequential SD2P}.

\end{document}